  \newtheorem{theorem}{Theorem}[section]
  \newtheorem{lemma}[theorem]{Lemma}
  \newtheorem{example}[theorem]{Example}
  \newenvironment{proof}[1][Proof]{\par\noindent\textbf{#1.}}{}
  \newcommand{\R}{\mathbb{R}}
  \newcommand{\N}{\mathbb{N}}
  \title{On the existence of homoclinic type solutions\\
  of inhomogenous Lagrangian systems}
  \author{\bf Jakub Ciesielski, Joanna Janczewska\\
  \small Faculty of Applied Physics and Mathematics\\
  \small Gda\'{n}sk University of Technology\\
  \small Narutowicza 11/12, 80-233 Gda\'{n}sk, Poland\\
  \small jciesielski@mif.pg.gda.pl, janczewska@mif.pg.gda.pl\\
  \bf Nils Waterstraat\\
  \small School of Mathematics, Statistics and Actuarial Science\\
  \small University of Kent, Canterbury\\
  \small Kent CT2 7NF, England\\
  \small N.Waterstraat@kent.ac.uk\\}
\begin{document}
  
    \maketitle
    
    \begin{abstract}
    
    We study the existence of homoclinic type solutions for second order
    Lagrangian systems of the type $\ddot{q}(t)-q(t)+a(t)\nabla G(q(t))=f(t)$,
    where $t\in\R$, $q\in\R^n$, $a\colon\R\to\R$ is a continuous positive
    bounded function, $G\colon\R^n\to\R$ is a $C^1$-smooth potential
    satisfying the Ambrosetti-Rabinowitz superquadratic growth condition
    and $f\colon\R\to\R^n$ is a continuous bounded square integrable forcing term.
    A homoclinic type solution is obtained as limit of $2k$-periodic solutions
    of an approximative sequence of second order differential equations.
    
    \end{abstract}
    
    \textbf{key words:} Homoclinic type solution; Lagrangian system;
    Critical point
    
    \textbf{AMS Subject Classification:} 37J45; 58E05; 34C37; 70H05
    
    \textbf{running head:} On the existence of homoclinic type solutions
             
    %% ---------------------------------------------------------------------
    
    \section{Introduction}
    
    The aim of this paper is to prove the existence of a solution
    for the second order Lagrangian system
    
    \begin{equation}\label{LS}
      \begin{cases}
        \ddot{q}(t)-q(t)+a(t)\nabla G(q(t))=f(t),\\
        \lim_{t\to\pm\infty}q(t)=\lim_{t\to\pm\infty}\dot{q}(t)=0,
      \end{cases}
    \end{equation}
    where $a\colon\R\to\R$ is a continuous positive bounded function,
    $G\colon\R^n\to\R$, $n\geq 1$, is a $C^1$-smooth potential
    satisfying the Ambrosetti-Rabinowitz superquadratic growth condition
    and $f\colon\R\to\R^n$ is a continuous bounded square integrable forcing term.
    
    Our intention is to generalise the following result by E. Serra, M. Tarallo
    and S. Terracini from \cite{STT} to the inhomogeneous systems \eqref{LS}.
    
    \begin{theorem}\label{STTthm}
      Assume that
      \begin{itemize}
        \item[$(\tilde{C}1)$] $G\in C^2(\R^n,\R)$,
        \item[$(C2)$] there exists $\mu>2$ such that for all $x\in\R^n\setminus\{0\}$,
        \[0<\mu G(x)\leq(\nabla G(x),x),\]
        \item[$(\tilde{C}3)$] $a\in C(\R,\R)$ is almost periodic in the sense of Bohr
        and \[\inf_{t\in\R}a(t)>0.\]
      \end{itemize}
      Then the problem
      \begin{equation}
        \begin{cases}
          \ddot{q}(t)-q(t)+a(t)\nabla G(q(t))=0,\\
          \lim_{t\to\pm\infty}q(t)=\lim_{t\to\pm\infty}\dot{q}(t)=0
        \end{cases}
      \end{equation}
      has at least one nonzero solution.
    \end{theorem}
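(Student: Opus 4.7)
The plan is to obtain a homoclinic solution as a limit of $2k$-periodic solutions, which is the standard periodic-approximation strategy for problems on $\R$. For each $k\in\N$, consider the $2k$-periodic problem and work in $E_k:=H^1_{2k}(\R,\R^n)$ with norm $\|q\|_k^2=\int_{-k}^{k}(|\dot q|^2+|q|^2)\,dt$. The associated action functional
\[
I_k(q)=\int_{-k}^{k}\Bigl(\tfrac{1}{2}|\dot q(t)|^2+\tfrac{1}{2}|q(t)|^2-a(t)G(q(t))\Bigr)dt
\]
is well defined and $C^1$ on $E_k$ thanks to $(\tilde C1)$; its critical points are classical $2k$-periodic solutions of $\ddot q-q+a(t)\nabla G(q)=0$. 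I would first verify that $I_k$ has mountain pass geometry: the condition $(C2)$ implies $G(x)\geq c|x|^\mu$ for large $|x|$ and $G(x)=o(|x|^2)$ near zero (integrating the inequality), so $I_k$ is positive and bounded below on a small sphere in $E_k$ and negative on some ray at infinity. The Ambrosetti--Rabinowitz condition also yields the Palais--Smale condition in the standard way, giving a critical point $q_k\in E_k$ at the mountain pass level $c_k$.

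Next I would obtain uniform estimates. Testing $I_k'(q_k)=0$ against $q_k$ and combining with $I_k(q_k)=c_k$ via $(C2)$ gives
\[
c_k=I_k(q_k)-\tfrac{1}{\mu}(I_k'(q_k),q_k)\geq\Bigl(\tfrac{1}{2}-\tfrac{1}{\mu}\Bigr)\|q_k\|_k^2,
\]
so it suffices to bound $c_k$ independently of $k$. A fixed compactly supported test path produces a uniform upper bound on $c_k$, hence $\sup_k\|q_k\|_k<\infty$. By the Sobolev embedding $H^1_{2k}\hookrightarrow L^\infty$ (with constant independent of $k$ once $k\geq 1$) and standard elliptic-type bootstrapping from the equation, $(q_k)$ is bounded in $C^2$ on every compact interval, so Arzel\`a--Ascoli yields a subsequence converging in $C^1_{\mathrm{loc}}(\R,\R^n)$ to some $q_0\in C^2(\R,\R^n)$ that satisfies the equation pointwise, with $q_0,\dot q_0\in L^2(\R,\R^n)\cap L^\infty$; the $L^2$ decay plus the equation give $q_0(t),\dot q_0(t)\to 0$ as $|t|\to\infty$.

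The main obstacle, and the only place where $(\tilde C3)$ enters essentially, is preventing $q_0\equiv 0$. A direct extraction may fail because of translation invariance along $\R$: the mass of $q_k$ could escape to infinity. Here I would use a concentration argument. The lower bound $\|q_k\|_k\geq\rho>0$ (since $q_k$ lies on a mountain pass curve past the small-sphere barrier) combined with the Lions-type lemma forces the existence of $\tau_k\in\Z$ such that $\int_{\tau_k-1}^{\tau_k+1}|q_k|^2\,dt$ stays bounded below. Replacing $q_k$ by the translate $\tilde q_k(t)=q_k(t+\tau_k)$ and $a$ by $a_k(t)=a(t+\tau_k)$, the translated functions still satisfy $\ddot{\tilde q}_k-\tilde q_k+a_k(t)\nabla G(\tilde q_k)=0$ on $[-k-\tau_k,k-\tau_k]$. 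Bohr almost periodicity of $a$ means $\{a_k\}$ has a uniformly convergent subsequence on compacta with limit $a_\infty$ still bounded below by $\inf a>0$; after this reduction, the limit $\tilde q_0$ solves $\ddot{\tilde q}_0-\tilde q_0+a_\infty(t)\nabla G(\tilde q_0)=0$ with $\tilde q_0\not\equiv 0$ by the concentration. A final argument using that $a_\infty$ is itself almost periodic (being a limit of translates of $a$) allows one to transfer the nonzero solution back to a homoclinic for the original $a$, e.g.\ by a further almost-periodic shift, completing the proof.
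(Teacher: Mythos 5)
The paper itself does not prove Theorem \ref{STTthm}; it is quoted from \cite{STT}, whose proof works directly on the real line and copes with the absence of translation invariance through S\'er\'e's splitting property of Palais--Smale sequences \cite{S} combined with Bochner's criterion of almost periodicity. Your periodic-approximation scheme is therefore a genuinely different route, and most of it is sound: the mountain pass geometry and Palais--Smale condition on each $E_k$, the uniform two-sided bounds $0<\sqrt{2\alpha}\leq\|q_k\|_{E_k}\leq C$, the Lions-type concentration alternative producing the shifts $\tau_k$, and the passage to a nonzero limit $\tilde q_0$ solving the equation with a coefficient $a_\infty$ in the hull of $a$.

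The gap is the final step, which is exactly where the almost periodic case is genuinely harder than the periodic one. You propose to transfer $\tilde q_0$ back to the original equation ``by a further almost-periodic shift''. By Bochner's criterion there indeed exist $s_j$ with $a_\infty(\cdot+s_j)\to a$ uniformly, and $\tilde q_0(\cdot+s_j)$ solves the correspondingly shifted equation; but in general $|s_j|\to\infty$, and since $\tilde q_0$ is homoclinic, $\tilde q_0(\cdot+s_j)\to 0$ in $C^2_{loc}(\R,\R^n)$. The limit you obtain for the coefficient $a$ is then the trivial solution: the mass escapes to infinity a second time, and now there is no minimax level bounded below by $\alpha$ to exclude this. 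In the periodic case one avoids the issue entirely by choosing $\tau_k$ to be integer multiples of the period, so that the translated equation coincides with the original one and no transfer is needed; in the almost periodic case this is unavailable, and closing the argument requires the finer machinery of \cite{STT} (a S\'er\'e-type decomposition of Palais--Smale sequences into translated solutions for elements of the hull, together with a comparison of the mountain pass levels attached to the different elements of the hull). As written, your argument proves the existence of a nonzero homoclinic for \emph{some} element of the hull of $a$, not for $a$ itself.
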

    
    Here and subsequently, $(\cdot,\cdot)\colon\R^n\times\R^n\to\R$
    denotes the standard inner product in $\R^n$, and $|\cdot|\colon\R^n\to[0,\infty)$
    is the induced norm. Let us recall that a function $a$
    is almost periodic in the sense of Bohr if for every $\varepsilon>0$
    there is a finite linear combination of sine and cosine functions
    that is of distance less than $\varepsilon$ from $a$ with respect to
    the supremum norm.
    
    The proof of Theorem \ref{STTthm} in \cite{STT} is of variational nature,
    i.e. a solution is found as a critical point of a suitable functional. 
    The lack of a group of symmetries for which the functional is invariant,
    which exists in the case of periodic potentials, is faced by a property
    of Palais-Smale sequences introduced by E. S\'{e}r\'{e}
    (see \cite{S}) and Bochner's criterion of almost periodicity (see \cite{B}).
    
    Let us now consider the inhomogeneous Lagrangian systems \eqref{LS}.
    Intuitively, if the forcing term $f(t)$ in \eqref{LS} is sufficiently small,
    then a homoclinic type solution should exist simply because of the existence
    in the homogenous case.
    
    Our main result affirms this and it also deals with the question
    how large the forcing term $f$ in \eqref{LS} can be:
    
    \begin{theorem}\label{CJWthm}
      Assume that
      \begin{itemize}
        \item[$(C1)$] $G\in C^1(\R^n,\R)$ and $|\nabla G(q)|=o(|q|)$ as $|q|\to 0$,
        \item[$(C2)$] there exists $\mu>2$ such that for all $x\in\R^n\setminus\{0\}$,
        \[0<\mu G(x)\leq(\nabla G(x),x),\]
        \item[$(C3)$] $a\in C(\R,\R)$ and $\inf_{t\in\R}a(t)>0$,
        \item[$(C4)$] $M:=\sup\{a(t)G(x)\colon t\in\R,\ |x|=1\}<\frac{1}{2}$,
        \item[$(C5)$] $\left(\int_{\R}|f(t)|^2dt\right)^{\frac{1}{2}}<\frac{1}{2\sqrt{2}}(1-2M)$.  
      \end{itemize}
      Then the inhomogenous Lagrangian system \eqref{LS} has at least one
      homoclinic type solution.
    \end{theorem}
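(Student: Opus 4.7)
My plan is to construct the homoclinic solution as a $C^1_{\mathrm{loc}}$-limit of a sequence of $2k$-periodic solutions of approximative periodic problems. I first choose a continuous $2k$-periodic forcing term $f_k\colon\R\to\R^n$ that coincides with $f$ on (most of) $[-k,k]$ and satisfies $\|f_k\|_{L^2([-k,k])}\le\|f\|_{L^2(\R)}$, and work on the Hilbert space $E_k=H^1_{2k}(\R,\R^n)$ endowed with the norm $\|q\|_k^2=\int_{-k}^{k}(|\dot q|^2+|q|^2)\,dt$. Critical points of the action
\[I_k(q)=\int_{-k}^{k}\Big(\tfrac12|\dot q|^2+\tfrac12|q|^2-a(t)G(q)+(f_k,q)\Big)\,dt\]
are exactly the $2k$-periodic solutions of the perturbed equation. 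A standard computation combining $|\bar q|\le(2k)^{-1/2}\|q\|_{L^2}$ with the fundamental theorem of calculus furnishes the $k$-uniform embedding $\|q\|_\infty^2\le 2\|q\|_k^2$, which will be used throughout.

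For the mountain-pass geometry I fix $\rho=1/\sqrt2$, so that $\|q\|_k=\rho$ forces $\|q\|_\infty\le 1$. Integrating $(C2)$ yields $G(x)\le G(x/|x|)|x|^\mu$ for $|x|\le 1$, whence $a(t)G(q)\le M|q|^\mu\le M|q|^2$ on this range, and therefore
\[I_k(q)\ge\Big(\tfrac12-M\Big)\|q\|_k^2-\|f\|_{L^2}\|q\|_k\quad\text{whenever }\|q\|_k=\rho.\]
Evaluating the right-hand side, $(C4)$ and $(C5)$ guarantee that $I_k(q)\ge(1-2M)/4-\|f\|_{L^2}/\sqrt2=:\alpha$ is a strictly positive constant independent of $k$. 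Fixing a nonzero $q_0\in C_c^\infty((-1,1),\R^n)$, the complementary inequality $G(x)\ge G(x/|x|)|x|^\mu$ for $|x|\ge 1$ gives $I_k(sq_0)\to-\infty$ as $s\to+\infty$ with estimates independent of $k\ge 1$. Because $E_k\hookrightarrow C^0$ is compact the Palais--Smale condition is standard, and the Mountain Pass Theorem produces $q_k\in E_k$ with $I_k'(q_k)=0$ and $\alpha\le I_k(q_k)\le\max_{s\ge 0}I_k(sq_0)=:\beta$, with $\beta$ also $k$-independent.

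Uniform bounds then follow from the Ambrosetti--Rabinowitz device: inserting $I_k'(q_k)q_k=0$ into $I_k(q_k)-\tfrac1\mu I_k'(q_k)q_k$ and using $(C2)$ one obtains
\[\Big(\tfrac12-\tfrac1\mu\Big)\|q_k\|_k^2-\Big(1-\tfrac1\mu\Big)\|f\|_{L^2}\|q_k\|_k\le\beta,\]
so $\|q_k\|_k\le C$ independently of $k$. The embedding above provides a $C^0$-bound on $(q_k)$, and inverting the differential equation bounds $(\ddot q_k)$ in $C^0$ on every compact interval; Arzelà--Ascoli plus a diagonal argument then extracts a subsequence with $q_{k_j}\to q$ in $C^1_{\mathrm{loc}}(\R,\R^n)$, and passing to the limit in the ODE shows that $q$ solves $\ddot q-q+a(t)\nabla G(q)=f$ on all of $\R$. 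Weak lower semicontinuity of $\|\cdot\|_{H^1([-T,T])}$ combined with $\|q_k\|_k\le C$ and $T\to\infty$ yields $q\in H^1(\R,\R^n)$, so $q(t)\to 0$ as $|t|\to\infty$; a routine energy bootstrap using $(C1)$ and $f\in L^2$ then gives $\dot q(t)\to 0$ as well.

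I expect the main obstacle to lie in the mountain-pass step, more precisely in obtaining a positive level $\alpha$ that remains bounded away from zero as $k\to\infty$. This is where the numerical thresholds in $(C4)$ and $(C5)$ must match the sharp constant $\sqrt2$ of the embedding $\|q\|_\infty\le\sqrt2\|q\|_k$ exactly: weakening either hypothesis would either let $\|q\|_\infty$ escape the unit ball on the sphere of radius $\rho$ (losing the quadratic control of the nonlinearity) or let the forcing push $\alpha$ down to $0$ (losing the mountain-pass geometry altogether), and the whole scheme would collapse.
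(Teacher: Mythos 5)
Your proposal is correct and follows essentially the same route as the paper: the same periodic approximations $I_k$ on $E_k$, the same mountain--pass geometry with $\rho=1/\sqrt{2}$ and the same $k$-independent level $\alpha$ coming from the embedding constant $\sqrt{2}$ together with $(C4)$--$(C5)$, the same compactly supported path giving a $k$-independent upper bound on the critical levels, and the Ambrosetti--Rabinowitz identity to bound $\|q_k\|_{E_k}$ uniformly before passing to a $C^{1}_{loc}$-limit (the paper delegates this last step to Krawczyk's approximation theorem, and streamlines the uniform bound slightly differently via $I_k-\tfrac12 I_k'q_k$, but these are only cosmetic differences).
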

    
    Let us briefly discuss our assumptions in Theorem \ref{CJWthm}.
    Condition $(C2)$ is the superquadratic growth condition
    due to A. Ambrosetti and P. Rabinowitz \cite{AmR}.
    Since $G$ has a global minimum at $0$ by $(C2)$, $(C1)$ is more general than $(\tilde{C}1)$.
    Moreover, it is readily seen by $(C2)$ that for every $q\neq 0$ the map
    \begin{displaymath}
      (0,\infty)\ni \xi\mapsto G(\xi^{-1} q)|\xi|^\mu\in\R
    \end{displaymath}
    is non-increasing, which yields the following inequalities:
    \begin{equation}\label{Amb}
      G(q)\leq G\left(\frac{q}{|q|}\right)|q|^{\mu},\ \textrm{if} \ 0<|q|\leq 1
    \end{equation}
    and
    \begin{equation}\label{Rab}
      G(q)\geq G\left(\frac{q}{|q|}\right)|q|^{\mu},\ \textrm{if} \ |q|\geq 1.
    \end{equation}
    As $\mu>2$, the inequality \eqref{Rab} implies that $G$
    grows faster than $|\cdot|^2$ at infinity.
    
    Clearly, $(C3)$ is more general than $(\tilde{C}3)$. Note that $(C3)$ and $(C4)$
    imply that $a$ is bounded, which, however, is also true for every
    almost periodic function in the sense of Bohr.
    
    The last two conditions $(C4)$ and $(C5)$ are closely related.
    Namely, the forcing term $f$ needs to be sufficiently small in $L^2(\R,\R^n)$,
    but the upper bound on the norm of $f$ depends on the restriction
    of the space variable of the potential $a(t)G(x)$ to the unit sphere in $\R^n$.
    
    The study of homoclinic solutions for Lagrangian systems has received
    much attention in recent years, especially when the potential is periodic in time.
    The existence problem of homoclinics has been widely investigated
    by variational methods, see for example in \cite{ArSz, BeM, CZ, R1, R2, RaT, S}.
    Existence results for perturbed systems were given in \cite{IzJ, IzJ2, J1, J2, K, Sal}.
    
    Our proof of Theorem \ref{CJWthm} is also of variational nature.
    Let us point out, however, that it is quite different from Serra,
    Tarallo and Terracini's proof of Theorem \ref{STTthm} in \cite{STT}.
    Here, we find a solution of \eqref{LS} as a limit in $C^{2}_{loc}(\R,\R^n)$
    of a sequence $\{q_k\}_{k\in\N}$, $q_{k}\in W^{1,2}_{2k}(\R,\R^n)$,
    obtained by an approximation scheme introduced by Krawczyk in \cite{K},
    where every $q_k$ is a critical point of a suitable functional $I_k$
    that we introduce below.
    
    We now prove Theorem \ref{CJWthm} in the following section
    and we nicely round off the paper by two numerical examples in a final section.

    %% ---------------------------------------------------------------------
    
    \section{Proof of Theorem \ref{CJWthm}}
    
    In what follows, we let $E$ be the Sobolev space $W^{1,2}(\R,\R^n)$
    of $W^{1,2}$-functions on $\R$ with values in $\R^n$ equipped with the norm
    
    \begin{displaymath}
      \|q\|_E=\left(\int_{-\infty}^{\infty}\left(|q(t)|^2
      +|\dot{q}(t)|^2\right)dt\right)^\frac{1}{2}.
    \end{displaymath}
    For each $k\in\N$, we denote by $E_k=W_{2k}^{1,2}(\R,\R^n)$
    the Sobolev space of $2k$-periodic $W^{1,2}$-functions with the norm
    
    \begin{displaymath}
      \|q\|_{E_k}=\left(\int_{-k}^{k}\left(|q(t)|^2
      +|\dot{q}(t)|^2\right)dt\right)^\frac{1}{2}.
    \end{displaymath}
    Then let $L^{\infty}_{2k}(\R,\R^n)$ be the space of $2k$-periodic,
    essentialy bounded and measureable functions from $\R$ into $\R^n$ with the norm
    
    \begin{displaymath}
      \|q\|_{L^{\infty}_{2k}}=\textrm{ess}\sup\{|q(t)|\colon t\in[-k,k]\}.
    \end{displaymath}
    We note for later reference that
      
    \begin{equation}\label{constant}
      \|q\|_{L^{\infty}_{2k}}\leq\sqrt{2}\|q\|_{E_k}
    \end{equation}
    for all $k\in\N$ and $q\in E_k$ (cf. \cite[Fact 2.8]{IzJ}).
    Finally, let $C_{loc}^2(\R,\R^n)$ denote the space
    of $C^2$-functions with the topology of almost uniform convergence
    of functions and all their derivatives up to second order.
    
    The following result can be found in \cite[Thm. 1.3]{K}.
    
    \begin{theorem}\label{Kthm}
      Let $f\colon\R\to\R^n$ be a non-trivial, bo\-unded, continuous
      and square integrable map and $V\colon\R\times\R^n\to\R$
      a $C^{1}$-smooth potential such that $\nabla_{q}V\colon\R\times\R^n\to\R^n$
      is bounded in the time variable.
      Assume that for each $k\in\N$ the boundary value problem
      \begin{equation}\label{hsk}
        \left\{
          \begin{array}{ll}
            \ddot{q}(t)+\nabla_{q}V_k(t,q(t))=f_k(t),\\
            q(-k)-q(k)=\dot{q}(-k)-\dot{q}(k)=0, 
          \end{array}
        \right.
      \end{equation}
      where $f_k\colon\R\rightarrow\R$ is a $2k$-periodic extension of $f\mid_{[-k,k)}$ 
      and $V_k\colon\R\times\R^n\to\R$ is a $2k$-periodic extension of $V\mid_{[-k,k)\times\R^n}$,
      has a periodic solution $q_k\in E_k$ and $\{\|q_k\|_{E_k}\}_{k\in\N}$
      is a bounded sequence in $\R$.
      Then there exists a subsequence $\{q_{k_j}\}_{j\in\N}$
      converging in the topology of $C_{loc}^2(\R,\R^n)$ to a function $q\in E$
      which is a homoclinic type solution of the Newtonian system
      \begin{equation}\label{hs}
        \ddot{q}(t)+\nabla_{q}V(t,q(t))=f(t),\ \ t\in\R. 
      \end{equation}
    \end{theorem}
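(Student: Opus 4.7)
The plan is to extract a subsequence $\{q_{k_j}\}$ that converges in $C^2_{loc}(\R,\R^n)$ by an Arzel\`a--Ascoli argument, then identify the limit as a homoclinic type solution of \eqref{hs}. The argument naturally splits into (a) deriving uniform regularity bounds on the $q_k$, (b) performing the compactness extraction and passing to the limit in the equation, and (c) establishing that the limit lies in $E$ and decays together with its derivative at infinity.

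For (a), let $C$ be a uniform bound for $\|q_k\|_{E_k}$. The embedding \eqref{constant} immediately gives $\|q_k\|_{L^{\infty}_{2k}}\leq\sqrt{2}\,C$, so there is a fixed compact ball $K\subset\R^n$ containing every $q_k(t)$. Since $f$ is bounded and, by the hypothesis on $V$, $\nabla_{q}V$ is bounded on $\R\times K$, the differential equation in \eqref{hsk} yields a uniform bound $\|\ddot q_k\|_{L^{\infty}(\R)}\leq B$ for some $B>0$. A pointwise bound on $\dot q_k$ then follows by a mean value argument: on every interval $[t,t+1]$ there exists $\tau$ with $|\dot q_k(\tau)|=|q_k(t+1)-q_k(t)|\leq 2\sqrt{2}\,C$, so the Lipschitz estimate $|\dot q_k(t)-\dot q_k(\tau)|\leq B$ controls $\dot q_k$ uniformly in $k$ and $t$.

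For (b), since $\{q_k\}$ and $\{\dot q_k\}$ are uniformly bounded on $\R$ and $\dot q_k$ is uniformly Lipschitz, Arzel\`a--Ascoli on each interval $[-N,N]$ together with a diagonal extraction supply a subsequence $q_{k_j}\to q$ in $C^1_{loc}(\R,\R^n)$ for some $q\in C^1(\R,\R^n)$. Fix $N\in\N$; as soon as $k_j>N$ one has $f_{k_j}|_{[-N,N]}=f|_{[-N,N]}$ and $V_{k_j}|_{[-N,N]\times\R^n}=V|_{[-N,N]\times\R^n}$. Continuity of $\nabla_{q}V$ together with the uniform convergence $q_{k_j}\to q$ on $[-N,N]$ then forces $\ddot q_{k_j}\to f-\nabla_{q}V(\cdot,q)$ uniformly on $[-N,N]$. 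Hence $q\in C^2(\R,\R^n)$, the convergence upgrades to $C^2_{loc}(\R,\R^n)$, and $q$ satisfies \eqref{hs}.

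For (c), I would use Fatou's lemma: for every $N$ and every $k_j>N$, $\int_{-N}^{N}(|q_{k_j}|^2+|\dot q_{k_j}|^2)\,dt\leq\|q_{k_j}\|_{E_{k_j}}^2\leq C^2$, and the $C^1_{loc}$ convergence combined with $N\to\infty$ gives $\|q\|_E\leq C$, so $q\in E$. The decay $q(t)\to 0$ at $\pm\infty$ is the standard fact that $W^{1,2}(\R,\R^n)\hookrightarrow C_0(\R,\R^n)$: the identity $|q(t)|^2-|q(s)|^2=2\int_s^t(q(\tau),\dot q(\tau))\,d\tau$ together with $q\dot q\in L^1$ (Cauchy--Schwarz) shows that $|q(\cdot)|^2$ has limits at $\pm\infty$, and these limits must vanish because $q\in L^2$. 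For $\dot q(t)\to 0$ note that $\dot q\in L^2$ and, since $\ddot q=f-\nabla_{q}V(\cdot,q)$ is bounded along $q$ (whose range is compact by $q\in W^{1,2}\subset L^\infty$), $\dot q$ is uniformly continuous; the elementary fact that a uniformly continuous $L^2(\R)$-function must vanish at $\pm\infty$ then concludes the proof. I expect this decay step, particularly the argument for $\dot q$, to be the main subtlety, since it requires fusing the ODE-based uniform continuity with the soft $L^2$-information recovered from the Fatou estimate.
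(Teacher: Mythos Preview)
The paper does not prove this theorem; it is quoted from \cite[Thm.~1.3]{K} and invoked as a black box in the proof of Theorem~\ref{CJWthm}. Your argument is the standard approximation-and-compactness proof for results of this type and is essentially correct.

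One minor technical point: the mean-value step ``there exists $\tau$ with $|\dot q_k(\tau)|=|q_k(t+1)-q_k(t)|$'' is not literally valid for $\R^n$-valued functions. A clean replacement is to observe that $\int_t^{t+1}|\dot q_k(s)|^2\,ds\leq\|q_k\|_{E_k}^2\leq C^2$, so by continuity of $|\dot q_k|$ (recall $q_k\in C^2$) there is some $\tau\in[t,t+1]$ with $|\dot q_k(\tau)|\leq C$; then the Lipschitz bound on $\dot q_k$ coming from $\|\ddot q_k\|_{L^\infty}\leq B$ yields $|\dot q_k(t)|\leq C+B$ uniformly in $t$ and $k$. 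With this adjustment, steps (a)--(c) go through as written.
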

  
    Our aim is to obtain a homoclinic type solution of \eqref{LS} by Theorem \ref{Kthm}
    as a limit in $C_{loc}^2(\R,\R^n)$ of a sequence $\{q_k\}_{k\in N}$
    such that for each $k\in\N$, $q_k\in E_k$ is a $2k$-periodic solution
    of the boundary value problem
    
    \begin{equation}\label{lsk}
    \left\{
      \begin{array}{ll}
        \ddot{q}(t)-q(t)+a_k(t)\nabla G(q(t))=f_k(t),\\
        q(-k)-q(k)=\dot{q}(-k)-\dot{q}(k)=0, 
      \end{array}
      \right.
    \end{equation}
    where $f_k$ is as above and $a_k\colon\R\to\R$ is a $2k$-periodic extension
    of $a\mid_{[-k,k)}$.
    
    To this purpose, we now define for $k\in\N$ a functional $I_k\colon E_k\to\R$ by
    
    \begin{equation}\label{Ik}
      I_k(q)=\frac{1}{2}\left\|q\right\|_{E_k}^2-\int_{-k}^{k}a_k(t)G(q(t))dt
      +\int_{-k}^{k}(f_k(t),q(t))dt. 
    \end{equation}
    Then $I_k\in C^1(E_k,\R)$ and, moreover,
    
    \begin{align}\label{derivative}
    \begin{split}
      I'_k(q)v=&\int_{-k}^{k}(\dot{q}(t),\dot{v}(t))dt
      +\int_{-k}^{k}(q(t),v(t))dt\\
      &-\int_{-k}^{k}(a_k(t)\nabla G(q(t)),v(t))dt
      +\int_{-k}^{k}(f_k(t),v(t))dt.
    \end{split}
    \end{align}
    Hence
    
    \begin{equation}\label{prime}
      I'_k(q)q=\|q\|_{E_k}^2-\int_{-k}^{k}(a_k(t)\nabla G(q(t)),q(t))dt
      +\int_{-k}^{k}(f_k(t),q(t))dt.
    \end{equation}
    Let us note for later reference that by \cite[Fact 2.2]{IzJ}, if
    
    \begin{displaymath}
      m:=\inf\{a(t)G(x)\colon t\in\R,\ |x|=1\}
    \end{displaymath}
    and $\mu>2$ is defined as in Theorem \ref{CJWthm},
    then for all $\zeta\in\R\setminus\{0\}$ and $q\in E_k\setminus\left\{0\right\}$
    
    \begin{equation}\label{zeta}
      \int_{-k}^{k}a_k(t)G(\zeta q(t))dt\geq m|\zeta|^{\mu}\int_{-k}^{k}|q(t)|^{\mu}dt-2km.
    \end{equation}
    Clearly, critical points of the functional $I_k$ are classical
    $2k$-periodic solutions of \eqref{lsk}. We will now obtain a critical point
    of $I_k$ by using the Mountain Pass Theorem from \cite{AmR}. This theorem provides
    the minimax characterisation for a critical value which is important
    for our argument. Let us recall its statement for the convenience of the reader.
    
    \begin{theorem}\label{MPT}
      Let $E$ be a real Banach space and $I\colon E\rightarrow\R$
      a $C^1$-smooth functional. If $I$ satisfies the following conditions:
      \begin{itemize}
        \item[$(i)$] $I(0)=0$,
        \item[$(ii)$] every sequence $\{u_j\}_{j\in\N}\subset E$
        such that $\{I(u_j)\}_{j\in\N}$ is bounded in $\R$
        and $I'(u_j)\rightarrow 0$ in $E^*$ as $j\rightarrow\infty$
        contains a convergent subsequence (Palais-Smale condition),
        \item[$(iii)$] there exist constants $\rho,\alpha>0$
        such that $I|_{\partial B_{\rho}(0)}\geq\alpha$,
        \item[$(iv)$] there is some $e\in E\setminus\overline{B}_{\rho}(0)$
        such that $I(e)\leq 0$,
      \end{itemize}
      where $B_{\rho}(0)$ denotes the open ball in $E$ of radius $\rho$ about $0$,
      then $I$ has a critical value $c\geq\alpha$ given by
      \[c=\inf_{g\in\Gamma}\max_{s\in[0,1]}I(g(s)),\]
      where \[\Gamma:=\left\{g\in C([0,1],E)\colon g(0)=0,\ g(1)=e \right\}.\]
    \end{theorem}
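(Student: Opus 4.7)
The plan is to prove Theorem \ref{MPT} by the classical deformation argument of Ambrosetti and Rabinowitz. First I would establish the lower bound $c \geq \alpha$ by a purely topological observation: for any $g \in \Gamma$ the continuous real-valued map $s \mapsto \|g(s)\|_E$ takes the value $0$ at $s = 0$ and the value $\|e\|_E > \rho$ at $s = 1$, so by the intermediate value theorem it assumes the value $\rho$, meaning $g([0,1])$ intersects $\partial B_\rho(0)$. Combined with $(iii)$ this forces $\max_{s\in[0,1]} I(g(s)) \geq \alpha$, and taking the infimum over $\Gamma$ gives $c \geq \alpha > 0$.

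The core of the argument is to show that $c$ is a critical value, which I would prove by contradiction. Assuming $c$ is regular, the Palais-Smale condition $(ii)$ furnishes constants $\bar\varepsilon, \delta > 0$ such that $\|I'(u)\|_{E^*} \geq \delta$ on the strip $S := \{u \in E : |I(u) - c| \leq \bar\varepsilon\}$; otherwise a sequence violating this would produce, via $(ii)$, a critical point at level $c$. Using a standard pseudo-gradient vector field on $\{I' \neq 0\}$, cut off by a Urysohn function to vanish outside $S$, one constructs a global continuous flow $\eta : [0,1] \times E \to E$ and chooses $\varepsilon \in (0, \bar\varepsilon)$ so that the time-one map satisfies $\eta(1, \{I \leq c + \varepsilon\}) \subset \{I \leq c - \varepsilon\}$ while fixing every point at which $|I - c| > \bar\varepsilon$.

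The main obstacle, and the reason all four hypotheses are genuinely needed, is guaranteeing that the deformation of a near-optimal path stays in $\Gamma$. Here conditions $(i)$ and $(iv)$ come into play: since $I(0) = 0$ and $I(e) \leq 0$ while $c \geq \alpha > 0$, one can shrink $\bar\varepsilon$ so that $c - \bar\varepsilon > 0$; then both endpoints $0$ and $e$ lie outside the strip $S$ and are therefore fixed by $\eta(1,\cdot)$. Choosing any $g \in \Gamma$ with $\max_{s\in[0,1]} I(g(s)) \leq c + \varepsilon$, the deformed path $\tilde g := \eta(1, g(\cdot))$ still belongs to $\Gamma$ and satisfies $\max_{s\in[0,1]} I(\tilde g(s)) \leq c - \varepsilon$, contradicting the definition of $c$ as an infimum. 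Hence $c$ is a critical value and $c \geq \alpha$, finishing the proof.
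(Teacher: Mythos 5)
Your sketch is correct: it is precisely the classical Ambrosetti--Rabinowitz argument (lower bound $c\geq\alpha$ by the intermediate value theorem applied to $s\mapsto\|g(s)\|_E$, then a contradiction via the quantitative deformation lemma built from a truncated pseudo-gradient flow, with $(i)$, $(iv)$ and $c\geq\alpha>0$ ensuring the endpoints $0$ and $e$ lie outside the deformed strip so that $\Gamma$ is preserved). Note that the paper itself does not prove this theorem at all --- it merely recalls the statement from the cited reference \cite{AmR} --- so there is no internal proof to compare against; your outline faithfully reproduces the standard proof found there, with the usual technical details (pseudo-gradient construction for $C^1$ functionals on a Banach space, the choice of $\varepsilon$ relative to $\delta$ and $\bar\varepsilon$) left implicit but correctly identified.
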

    
    The following lemma, in combination with Theorem \ref{Kthm},
    is the keystone of our proof of Theorem \ref{CJWthm}.
    
    \begin{lemma}\label{geometry}
      For each $k\in\N$, the functional $I_k$ given by \eqref{Ik}
      has the mountain pass geometry, i.e. it satisfies all assumptions
      of Theorem \ref{MPT}.
    \end{lemma}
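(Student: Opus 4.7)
The plan is to verify the four hypotheses of Theorem \ref{MPT} separately. Condition $(i)$ is immediate from \eqref{Ik}, once one observes that $G(0)=0$; this follows from $(C2)$ together with the continuity of $G$ and the monotonicity property that $\xi\mapsto G(\xi^{-1}q)|\xi|^\mu$ is non-increasing on $(0,\infty)$ (letting $\xi\to\infty$ forces $G(0)=0$).

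For $(iii)$, I would set $\rho=1/\sqrt{2}$, so that by \eqref{constant} every $q\in E_k$ with $\|q\|_{E_k}=\rho$ satisfies $\|q\|_{L^\infty_{2k}}\leq 1$. Combining \eqref{Amb} with the definition of $M$ in $(C4)$ then gives $a_k(t)G(q(t))\leq M|q(t)|^\mu\leq M|q(t)|^2$ pointwise (using $\mu>2$ and $|q(t)|\leq 1$), hence $\int_{-k}^{k}a_k(t)G(q(t))\,dt\leq M\|q\|_{E_k}^2$. Together with Cauchy--Schwarz on the forcing integral this yields $I_k(q)\geq (1-2M)/4-\|f\|_{L^2}/\sqrt{2}$, which is strictly positive by $(C5)$ and supplies $\alpha$. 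For $(iv)$, I would fix any nonzero $q_0\in E_k$ and test the ray $\zeta q_0$ for $\zeta>0$. Since $G$ is continuous and strictly positive on the compact unit sphere (by $(C2)$) while $\inf_{t\in\R}a(t)>0$, the constant $m$ is strictly positive, and \eqref{zeta} produces
\[ I_k(\zeta q_0)\leq \tfrac{\zeta^2}{2}\|q_0\|_{E_k}^2 - m\zeta^\mu\int_{-k}^{k}|q_0(t)|^\mu\,dt + 2km + \zeta\|f\|_{L^2}\|q_0\|_{E_k}. \]
As $\mu>2$, the right-hand side tends to $-\infty$ as $\zeta\to\infty$, so a sufficiently large $\zeta$ gives $e=\zeta q_0$ with $I_k(e)\leq 0$ and $\|e\|_{E_k}>\rho$.

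The Palais--Smale condition $(ii)$ is the main obstacle. Let $\{u_j\}\subset E_k$ satisfy $\{I_k(u_j)\}$ bounded and $I'_k(u_j)\to 0$ in $E_k^*$. I would first form the combination $I_k(u_j)-\mu^{-1}I'_k(u_j)u_j$: the nonlinear contribution $\int a_k(t)\bigl(\mu^{-1}(\nabla G(u_j),u_j)-G(u_j)\bigr)\,dt$ is non-negative by $(C2)$ and $a_k>0$, leaving
\[ \left(\tfrac{1}{2}-\tfrac{1}{\mu}\right)\|u_j\|_{E_k}^2 \leq |I_k(u_j)| + \tfrac{1}{\mu}\|I'_k(u_j)\|_{E_k^*}\|u_j\|_{E_k} + \left(1-\tfrac{1}{\mu}\right)\|f\|_{L^2}\|u_j\|_{E_k}. \]
Since $\mu>2$, this quadratic-versus-linear bound forces $\{u_j\}$ to be bounded in $E_k$. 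Extracting a weakly convergent subsequence $u_j\rightharpoonup u$ in $E_k$, the compact embedding of $E_k$ into $C([-k,k],\R^n)$ (Rellich--Kondrachov on the period cell) gives $u_j\to u$ uniformly, hence also $\nabla G(u_j)\to \nabla G(u)$ in $L^\infty_{2k}$ by continuity of $\nabla G$ and uniform boundedness of $\{u_j\}$. To upgrade to strong convergence I would expand $I'_k(u_j)(u_j-u)\to 0$ using \eqref{derivative}: the integrals involving $a_k\nabla G(u_j)$ and $f_k$ against $u_j-u$ vanish by uniform convergence, so the Hilbert inner product $\langle u_j,u_j-u\rangle_{E_k}$ tends to zero, whence $\|u_j\|_{E_k}\to\|u\|_{E_k}$; combined with weak convergence in the Hilbert space $E_k$, this yields strong convergence and establishes $(ii)$.
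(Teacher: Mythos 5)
Your proposal is correct and follows essentially the same route as the paper: the same choice $\rho=1/\sqrt{2}$ via \eqref{constant} and \eqref{Amb} for (iii), the same combination $I_k(u_j)-\mu^{-1}I'_k(u_j)u_j$ with $(C2)$ for boundedness of Palais--Smale sequences, and the same compact-embedding plus weak-convergence upgrade to strong convergence for (ii). The only substantive deviation is in (iv), where you take an arbitrary ray $\zeta q_0$ separately for each $k$; this does suffice for the lemma as stated, but the paper instead fixes $Q\in E_1$ with $Q(\pm 1)=0$ and defines $e_k$ as the extension of $e_1=\zeta Q$ by zero, so that $\|e_k\|_{E_k}$ and $\max_{s\in[0,1]}I_k(se_k)$ are independent of $k$ --- a uniformity that is exploited afterwards to bound the critical levels $c_k$ and hence $\{\|q_k\|_{E_k}\}_{k\in\N}$.
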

    
    \begin{proof}\ 
      We fix $k\in\N$ and we now show the assumptions (i)-(iv) in Theorem \ref{MPT} for $I_k$.
      It is clear that $I_k(0)=0$, which is (i). In order to show the Palais-Smale condition (ii),
      we consider a sequence $\{u_j\}_{j\in\N}\subset E_k$
      such that $\{I_k(u_j)\}_{j\in\N}\subset\R$ is bounded
      and $I'_k(u_j)\rightarrow 0$ in $E^{*}_{k}$ as $j\rightarrow +\infty$.
      Consequently, there exists a constant $C_k>0$ such that for all $j\in\N$
      we have
      
      \begin{equation}\label{PS}
        |I_k(u_j)|\leq C_k,\ \ \ \|I'_k(u_j)\|_{E_{k}^{*}}\leq C_k.
      \end{equation}
      
      By \eqref{Ik} and $(C2)$ we get
      
      \begin{align}\label{boundedness}
      \begin{split}
        \|u_j\|_{E_k}^2\leq & 2I_k(u_j)-2\int_{-k}^{k}(f_k(t),u_j(t))dt\\
        & +\frac{2}{\mu}\int_{-k}^{k}(a_k(t)\nabla G(u_j(t)),u_j(t))dt.
      \end{split}
      \end{align}
      As
      
      \begin{align*}
        \int^k_{-k}(a_k(t)\nabla G(u_j(t)),u_j(t))dt
        =\|u_{j}\|^2_{E_k}-I'_k(u_j)u_j+\int^k_{-k}(f_k(t),u_j(t))dt
      \end{align*}
      by \eqref{prime}, we obtain      
      
      \begin{align*}
        \left(1-\frac{2}{\mu}\right)\|u_j\|^2_{E_k}
        &\leq 2 I_k(u_j)-\frac{2}{\mu}I'_k(u_j)(u_j)
        -2\int^k_{-k}(f_k(t),u_j(t))dt\\
        &+\frac{2}{\mu}\int^k_{-k}(f_k(t),u_j(t))dt
      \end{align*}     
      and so
      
      \begin{equation}\label{estimation}
      \begin{split}
        \left(1-\frac{2}{\mu}\right)\|u_j\|_{E_k}^2\leq & 2I_k(u_j)\\
        & +\left(\frac{2}{\mu}\|I'_k(u_j)\|_{E^{*}_{k}}
        +\left(2-\frac{2}{\mu}\right)\|f_k\|_{L^2_{2k}}\right)\|u_j\|_{E_k},
      \end{split}
      \end{equation}
      where we denote by
      
      \begin{align*}
        \|q\|_{L^2_{2k}}=\left(\int^k_{-k}|q(t)|^2dt\right)^{\frac{1}{2}}
      \end{align*}
      the norm of the space $L^2_{2k}(\R,\R^n)$ of all $2k$-periodic $L^2$-functions.
      Combining \eqref{estimation} with $(C5)$ and \eqref{PS} we get
      
      \begin{displaymath}
        \left(1-\frac{2}{\mu}\right)\|u_j\|_{E_k}^2
        -\left(\frac{2C_k}{\mu}
        +\frac{1}{\sqrt{2}}\left(1-\frac{1}{\mu}\right)(1-2M)\right)\|u_j\|_{E_k}
        -2C_k\leq 0,
      \end{displaymath}
      which yields the boundedness of $\{u_j\}_{j\in\N}$ in $E_k$ as $\mu>2$ by $(C2)$.
      Going to a subsequence if necessary, we can assume that there exists a function
      $u\in E_k$ such that $u_j\rightharpoonup u$ weakly in $E_k$ as $j\rightarrow +\infty$,
      and hence $\{u_j\}_{j\in\N}$ also converges to $u$ uniformly, as $E_k$
      is compactly embedded in $C([-k,k],\R^n)$.
      This shows in particular that $\|u_{j}-u\|_{L^{2}_{2k}}\rightarrow 0$
      as $j\rightarrow\infty$.
      
      Applying \eqref{derivative} we have
      
      \begin{align*}
        I'_k(u_j)&(u_j-u)=\int^k_{-k}(\dot{u}_j(t),\dot{u}_j(t)-\dot{u}(t))dt
        +\int^k_{-k}(u_j(t),u_j(t)-u(t))dt\\
        &-\int^k_{-k}(a_k(t)\nabla G(u_j(t)),u_j(t)-u(t))dt
        +\int^k_{-k}(f_k(t),u_j(t)-u(t))dt
      \end{align*}
      and
      
      \begin{align*}
      I'_k(u)&(u_j-u)=\int^k_{-k}(\dot{u}(t),\dot{u}_j(t)-\dot{u}(t))dt
      +\int^k_{-k}(u(t),u_j(t)-u(t))dt\\
      &-\int^k_{-k}(a_k(t)\nabla G(u(t)),u_j(t)-u(t))dt
      +\int^k_{-k}(f_k(t),u_j(t)-u(t))dt,
      \end{align*}
      which yields
      
      \begin{align*}
      \|\dot{u}_j-\dot{u}\|^2_{L^2_{2k}}&=(I'_k(u_j)-I'_k(u))(u_j-u)-\|u_j-u\|^2_{L^2_{2k}}\\
      &+\int^k_{-k}a_k(t)(\nabla G(u_j(t))-\nabla G(u(t)),u_j(t)-u(t))dt.
      \end{align*}
      As $I'_k(u_j)$ is bounded by \eqref{PS}, $\nabla G$ is continuous
      and $u_j\rightarrow u$ uniformly, we see that
      $\|\dot{u}_j-\dot{u}\|^2_{L^2_{2k}}\rightarrow 0$.
      Hence $\|\dot{u}_j-\dot{u}\|_{E_k}\rightarrow 0$ and the Palais-Smale condition is shown.
      
      In the next step we will prove that there exist constants $\rho>0$ and $\alpha>0$
      independent of $k\in\N$ such that ${I_{k}}|_{\partial B_{\rho}(0)}\geq\alpha$, which is (iii).
      Assume that $0<\|q\|_{L^{\infty}_{2k}}\leq 1$. By \eqref{Amb} and $(C4)$ we obtain
      
      \begin{displaymath}
        \int_{-k}^{k}a_k(t)G(q(t))dt\leq\int_{-k}^{k}a_k(t)G\left(\frac{q(t)}{|q(t)|}\right)|q(t)|^{\mu}dt
        \leq M\int_{-k}^{k}|q(t)|^2dt\leq M\|q\|^2_{E_k}.
      \end{displaymath}      
      Combining this with \eqref{Ik} we get
      
      \begin{displaymath}
        I_k(q)\geq\frac{1}{2}(1-2M)\|q\|^2_{E_k}-\|f\|_{L^2}\|q\|_{E_k}.
      \end{displaymath}
      Let $\rho=\frac{1}{\sqrt{2}}$
      and $\alpha=\frac{1}{\sqrt{2}}\left(\frac{1}{2\sqrt{2}}(1-2M)-\|f\|_{L^2}\right)$.
      From $(C5)$ it follows that $\alpha>0$.
      Using \eqref{constant}, if $\|q\|_{E_k}=\rho$, then $0<\|q\|_{L^{\infty}_{2k}}\leq 1$,
      which implies $I_k(q)\geq\alpha$.
      
      It remains to prove (iv), i.e. that for all $k\in\N$ there is $e_k\in E_k$
      such that $\|e_k\|_{E_k}>\rho$ and $I_k(e_k)\leq 0$.
      Applying \eqref{Ik} and \eqref{zeta}, we have
      
      \begin{displaymath}
        I_k(\zeta q)\leq\frac{\zeta^2}{2}\|q\|^{2}_{E_k}
        -m|\zeta|^{\mu}\int_{-k}^{k}|q(t)|^{\mu}dt
        +|\zeta|\|f_k\|_{L^{2}_{2k}}\|q\|_{E_k}+2km
      \end{displaymath}
      for all $\zeta\in\R\setminus\{0\}$ and $q\in E_k\setminus\{0\}$.
      Let us choose $0\neq Q\in E_1$ such that $Q(\pm 1)=0$.
      As $\mu>2$ and $m>0$, there exists $\zeta\in\R\setminus\{0\}$
      such that $\|\zeta Q\|_{E_1}>\rho$ and $I_1(\zeta Q)<0$.
      
      We set $e_1(t)=\zeta Q(t)$ and define for each positive integer $k>0$,
      
      \begin{displaymath}
        e_k(t)=
        \begin{cases}
          e_1(t)\ \textrm{if}\ |t|\leq 1\\
          0\ \textrm{if}\ 1<|t|\leq k.
        \end{cases}
      \end{displaymath}
      Then $e_k\in E_k$, $\|e_k\|_{E_k}=\|e_1\|_{E_1}>\rho$ and $I_k(e_k)=I_1(e_1)<0$
      for all $k\in\N$.
      
      Consequently, by Theorem \ref{MPT}, the action $I_k$ has a critical value
      $c_k\geq\alpha$ given by
      
      \begin{equation}\label{critpoint}
        c_k=\inf_{g\in\Gamma_k}\max_{s\in[0,1]}I_k(g(s)),
      \end{equation}
      where $\Gamma_k=\{g\in C([0,1],E_k)\colon g(0)=0,\ g(1)=e_k\}$.
            
    \end{proof}
    
    \hspace{10cm} $\Box$
    
    From Lemma \ref{geometry} we conclude that for all $k\in\N$
    there exists $q_k\in E_k$ such that
    
    \begin{displaymath}
      I_k(q_k)=c_k,\ \ \ I'_k(q_k)=0,
    \end{displaymath}
    where the values $c_k$ are given by \eqref{critpoint}.
    By Theorem \ref{Kthm}, in order to finish the proof of Theorem \ref{CJWthm},
    it suffices to show that the sequence of real numbers $\{\|q_k\|_{E_k}\}_{k\in\N}$
    is bounded. For this purpose, set
    
    \begin{displaymath}
      M_0=\max_{s\in[0,1]}I_1(se_{1}).
    \end{displaymath}
    As $I_k(se_{k})=I_1(se_{1})$ for all $s\in[0,1]$, $k\in\N$, and $se_k\in\Gamma_k$,
    we have by \eqref{critpoint}
    
    \begin{equation}\label{bound}
      c_k=\inf_{g\in\Gamma_k}\max_{s\in[0,1]}I_k(g(s))\leq M_0.
    \end{equation}
    Using \eqref{Ik} and \eqref{prime} we obtain
    
    \begin{align*}
      c_k=I_k(q_k)-\frac{1}{2}I'_{k}(q_k)q_k=&
      \frac{1}{2}\int_{-k}^{k}\left(a_k(t)\nabla G(q_k(t)),q_k(t)\right)dt\\
      &-\int_{-k}^{k}a_k(t)G(q_k(t))dt
      +\frac{1}{2}\int_{-k}^{k}(f_k(t),q_k(t))dt
    \end{align*}
    for all $k\in\N$, and by $(C2)$,
    
    \begin{displaymath}
      c_k\geq\left(\frac{\mu}{2}-1\right)\int_{-k}^{k}a_k(t)G(q_k(t))dt
      -\frac{1}{2}\|f_k\|_{L^{2}_{2k}}\|q_k\|_{E_k}.
    \end{displaymath}
    Hence
    
    \begin{displaymath}
      \int_{-k}^{k}a_k(t)G(q_k(t))dt
      \leq\frac{1}{\mu-2}\left(2c_{k}+\|f_k\|_{L^{2}_{2k}}\|q_k\|_{E_k}\right).
    \end{displaymath}
    Combining this with \eqref{Ik}, \eqref{bound} and $(C5)$,
    for each $k\in\N$, we have
    
    \begin{align*}
      c_k&=I_k(q_k)=\frac{1}{2}\|q_k\|^2_{E_k}-\int^k_{-k}a(t)G(q_k(t))dt
      +\int^k_{-k}(f_k(t),q_k(t))dt\\
      &\geq \frac{1}{2}\|q_k\|^2_{E_k}
      -\frac{1}{\mu-2}\left(2c_{k}+\|f_k\|_{L^{2}_{2k}}\|q_k\|_{E_k}\right)
      -\|f_k\|_{L^2_{2k}}\|q_k\|_{E_k},
    \end{align*}
    and so
    
    \begin{displaymath}
      \|q_k\|_{E_k}^{2}
      -\frac{1}{\sqrt{2}}\frac{\mu-1}{\mu-2}\left(1-2M\right)\|q_k\|_{E_k}
      -\frac{2\mu M_{0}}{\mu-2}\leq 0,
    \end{displaymath}
    which completes the proof of Theorem \ref{CJWthm}.

    %% ---------------------------------------------------------------------
    
    \section{One-dimensional Examples}
    
    In this section we present two simple one-dimensional examples,
    i.e. we consider the case $n=1$.
    
    \begin{example}\label{ex1}
      Let a function $a\colon\R\to\R$, a forcing term $f\colon\R\to\R$
      and a potential $G\colon\R\to\R$ be given as follows:
      
      \begin{align*}
        a(t)&=\frac{1}{5}e^{-t^2}+\frac{1}{10},\ \ \ t\in\R,\\
        f(t)&=\frac{2}{5}e^{-\frac{t^2}{2}},\ \ \ t\in\R,\\
        G(q)&=q^4,\ \ \ q\in\R.
      \end{align*}
      It is easy to check that $a$, $f$ and $G$ satisfy the assumptions $(C1)-(C5)$.
      
      The figures \ref{fig:1k10}-\ref{fig:1k200} show the graphs of approximative
      solutions $q_k$ of \eqref{lsk} for $k=10,16,90,140,200$.
    \end{example}
      
    \begin{figure}
      \centering
      \includegraphics[width=10cm]{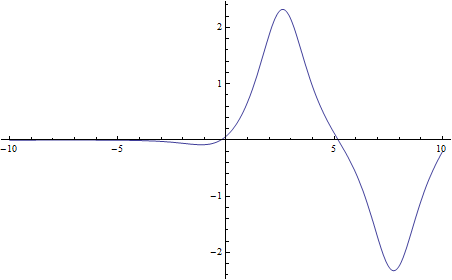}
      \caption{Example \ref{ex1}, an approximative solution of \eqref{lsk} for $k=10$}
      \label{fig:1k10}
    \end{figure}
      
    \begin{figure}\label{fig:1k16}
      \centering
      \includegraphics[width=10cm]{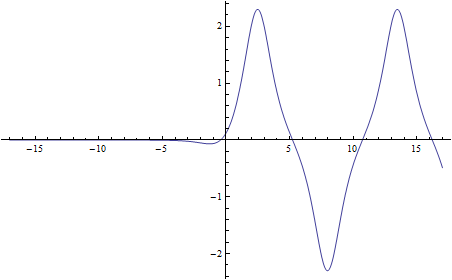}
      \caption{Example \ref{ex1}, an approximative solution of \eqref{lsk} for $k=16$}
    \end{figure}
      
    \begin{figure}
      \centering
      \includegraphics[width=10cm]{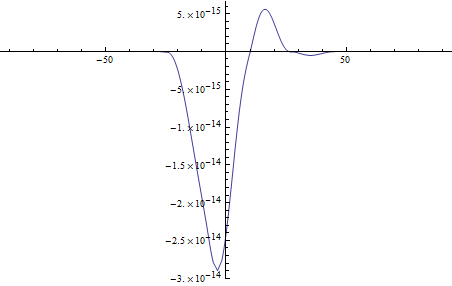}
      \caption{Example \ref{ex1}, an approximative solution of \eqref{lsk} for $k=90$}
      \label{fig:1k90}
    \end{figure}
      
    \begin{figure}
      \centering
      \includegraphics[width=10cm]{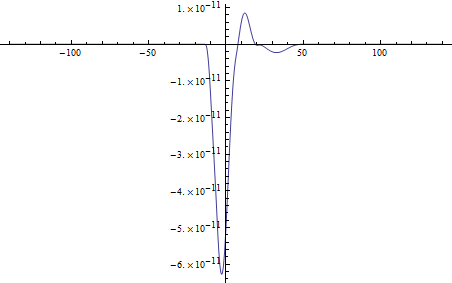}
      \caption{Example \ref{ex1}, an approximative solution of \eqref{lsk} for $k=140$}
      \label{fig:1k140}
    \end{figure}
      
    \begin{figure}
      \centering
      \includegraphics[width=10cm]{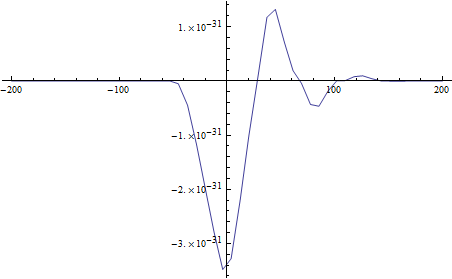}
      \caption{Example \ref{ex1}, an approximative solution of \eqref{lsk} for $k=200$}
      \label{fig:1k200}
    \end{figure}
      
    %%------------------------------------------------------------------------
    
    \begin{example}\label{ex2}
      Let us define functions $a,f,G\colon\R\to\R$ as follows:
      
      \begin{align*}
        a(t)&=\frac{1}{\pi}\arctan(t)+\frac{1}{2},\ \ t\in\R,\\
        f(t)&=\frac{1}{2}e^{-\frac{t^2}{2}},\ \ t\in\R,\\
        G(q)&=q^4,\ \ q\in\R.
      \end{align*}
      Again, it is readily seen that the assumptions $(C1)-(C5)$ are satisfied.
      
      The figures \ref{fig:2k10}-\ref{fig:2k140} show the graphs of approximative
      solutions $q_k$ of \eqref{lsk} for $k=10,16,90,140$.
    \end{example}
   
    \begin{figure}
      \centering
      \includegraphics[width=10cm]{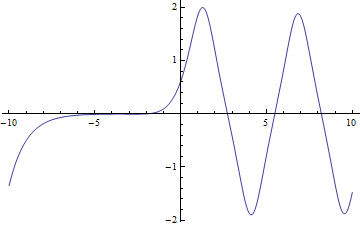}
      \caption{Example \ref{ex2}, an approximative solution of \eqref{lsk} for $k=10$}
      \label{fig:2k10}
    \end{figure}
    
    \begin{figure}
      \centering
      \includegraphics[width=10cm]{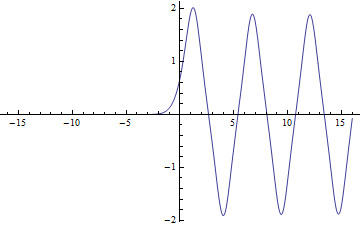}
      \caption{Example \ref{ex2}, an approximative solution of \eqref{lsk} for $k=16$}
      \label{fig:2k16}
    \end{figure}
    
    \begin{figure}
      \centering
      \includegraphics[width=10cm]{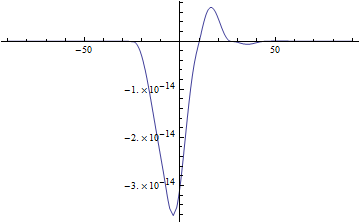}
      \caption{Example \ref{ex2}, an approximative solution of \eqref{lsk} for $k=90$}
      \label{fig:2k90}
    \end{figure}
    
    \begin{figure}
      \centering
      \includegraphics[width=10cm]{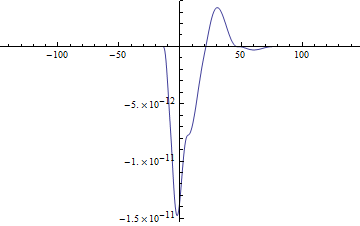}
      \caption{Example \ref{ex2}, an approximative solution of \eqref{lsk} for $k=140$}
      \label{fig:2k140}
    \end{figure}
    
    %% ---------------------------------------------------------------------
    
    \subsection*{Acknowledgments}
    The second and the third author are supported by Grant PPP-PL no. 57217076
    of DAAD and MNiSW.

    %% ---------------------------------------------------------------------
    
    \bibstyle

  \end{document}